\documentclass[11pt,a4paper,oneside,reqno]{amsart}

\usepackage[all]{xy}
\SelectTips{cm}{} \addtolength{\textwidth}{2cm} \calclayout
\usepackage{amsfonts,amssymb,amscd,amsmath,enumerate,verbatim,calc}

\usepackage{amscd,amssymb,amsopn,amsmath,amsthm,graphics,amsfonts,enumerate,verbatim,calc}
\usepackage[dvips]{graphicx}
\usepackage[colorlinks=true,linkcolor=blue,citecolor=blue]{hyperref}
\input xy
\xyoption{all}

\newcommand{\rt}{\rightarrow}
\newcommand{\lrt}{\longrightarrow}

\newcommand{\st}{\stackrel}
\newcommand{\pa}{\partial}

\newcommand{\C}{\mathbf{C} }

\newcommand{\K}{\mathbf{K} }
\newcommand{\X}{\mathbf{X} }
\newcommand{\Y}{\mathbf{Y} }

\newcommand{\F}{\mathbf{F} }

\newcommand{\Z}{\mathbb{Z} }

\newcommand{\CE}{\mathcal{E}}
\newcommand{\CA}{\mathcal{A} }

\newcommand{\CF}{\mathcal{F} }
\newcommand{\CG}{\mathcal{G} }

\newcommand{\CL}{\mathcal{L} }
\newcommand{\CH}{\mathcal{H} }

\newcommand{\CK}{\mathcal{K} }

\newcommand{\CQ}{\mathcal{Q} }

\newcommand{\CX}{\mathcal{X} }
\newcommand{\CY}{\mathcal{Y} }

\newcommand{\CZ}{\mathcal{Z} }
\newcommand{\CJ}{\mathcal{J} }

\newcommand{\im}{{\rm{Im}}}

\newcommand{\Ker}{{\rm{Ker}}}

\newcommand{\Hom}{{\rm{Hom}}}

\newtheorem{theorem}{Theorem}[section]

\newtheorem{lemma}[theorem]{Lemma}
\newtheorem{proposition}[theorem]{Proposition}

\theoremstyle{definition}

\newtheorem{example}[theorem]{Example}

\theoremstyle{plain}
\newtheorem{stheorem}{Theorem}[subsection]

\theoremstyle{definition}
\newtheorem{sdefinition}[stheorem]{Definition}

\numberwithin{equation}{section}
\begin{document}

\title[Purity and flatness in symmetric monoidal closed  exact categories]{Purity and flatness  in symmetric monoidal closed  exact  categories}
\author[ E. Hosseini and A. Zaghian]{E. Hosseini and A. Zaghian}

\address{Department of Mathematics, Shahid Chamran University of
Ahvaz, P.O.Box: 61357-83151, Ahvaz, Iran.}
\email{e.hosseini@scu.ac.ir}
\address{Department of Mathematics and Cryptography, Malek Ashtar University of
Technology, P.O.Box: 115-83145, Isfahan, Iran.}
\email{a.zaghian@mut-es.ac.ir}

\keywords{Closed category, exact category, flat object, pure injective object.\\
2010 Mathematical subject classification: 18G55, 18A05, 18A20,
18F20.}
\begin{abstract}
Let $(\CA, \textmd{-}\otimes\textmd{-})$ be a  symmetric monoidal
closed exact category. This category is a natural framework to
define the notions of $\otimes$-purity and $\otimes$-flatness. We
show that an object $\CF$ in $\CA$ is $\otimes$-flat if and only if
any conflation  ending in $\CF$ is $\otimes$-pure. Furthermore, we
prove a generalization of the Lambek Theorem (\cite{La63}) in $\CA$.
In the case $\CA$ is a quasi-abelian category, we prove that $\CA$
has enough pure injective objects.
\end{abstract}
\maketitle

\section{Introduction}
Let $\CA$ be an additive category. A sequence
$\xymatrix@C0.7pc@R0.9pc{ \CX\ar[r]^f&\CY\ar[r]^g&\CZ}$ in $\CA$ is
said to be a \textit{conflation} if $f$ is the kernel of $g$ and $g$
is the cokernel of $f$. The map  $f$ is called an \textit{inflation}
and $g$ is called a \textit{deflation} (\cite[Appendix A]{Ke90}).
For a given class $\CE$ of conflations in $\CA$,  $(\CA, \CE)$ is
said to be  an \textit{exact} \textit{category} if the following
axioms hold.
\begin{itemize}
\item [(i)]  For any object $A\in\CA$, the identity morphism $1_A$ is an inflation.
\item [(ii)]  For any object $A\in\CA$, the identity morphism $1_A$ is a
deflation.
\item [(iii)] Deflations (resp. Inflations) are closed under composition.
\item [(iv)] The  pullback (resp. pushout) of a deflation (resp. inflation)  along an arbitrary morphism exists and yields a deflation (resp. inflation).

\end{itemize}
See \cite[Appendix A]{Ke90} for more details on exact categories.
Assume that  $(\CA, \textmd{-}\otimes\textmd{-})$ is a
\textit{symmetric} \textit{monoidal} \textit{closed}  exact
category. Then there is a bifunctor
${\CH}om_\CA(\textmd{-},\textmd{-}):\CA^{\mathrm{op}}\times\CA\lrt\CA$
such that for any object $\CG$ in $\CA$ the functor -$\otimes
\CG:\CA\lrt\CA$ has a right adjoint
${\CH}om_{\CA}(\CG,\textmd{-}):\CA\lrt\CA$, i.e. for any pair of
objects $\CF$ and $\CK$ in $\CA$, we have  an isomorphism
\begin{align}\label{0}
  \mathrm{Hom}_{\CA}(\CF\otimes\CG,\CK)\cong
\mathrm{Hom}_{\CA}(\CF,{\CH}om_{\CA}(\CG,\CK))
\end{align}
which is naturally in all three arguments (see  \cite{KM71} for more
details). The bifunctor ${\CH}om_\CA(\textmd{-},\textmd{-})$ is
called the \textit{internal} \textit{hom} on $\CA$.

In some situations, the category $\CA$ does not have enough
projective objects (\cite[Ex III.6.2]{H}). This causes that,some of
the most important theorems in homological algebra do not hold in
general case. The Lazard-Govorov Theorem is one of them. It asserts
that any flat module over a ring  is a direct limit of finitely
generated free modules (\cite{La99} and \cite{Go65}). This theorem
has a significance role in the proofs of some important theorems in
homological algebra, especially \cite[Theorem 3]{Sten68} and
\cite[Theorem 2.4]{EG}. In this work we prove a generalization of
the main result of \cite{La63}, \cite[Theorem 3]{Sten68} and
\cite[Theorem 2.4]{EG} in $\CA$ regardless of whether the
Lazard-Govorov Theorem holds in $\CA$  or not (see also
\cite{Em16}). To this end, we need to prove that for any object
$\CF$ in $\CA$, $\CF^+= {\CH}om_{\CA}(\CF,\CJ)$ is a pure injective
object where $\CJ$ is an injective cogenerator for $\CA$. This
approach is very beneficial. Because we can generalize it to the
category of complexes in $\CA$ and deduce the same results for
complexes.

\vspace{.5cm}

\textbf{Setup:} In this work, all categories are symmetric monoidal
closed unless otherwise  specified.

\vspace{.5cm}

\section{Purity and flatness  in $\CA$}
This section is devoted to the relation between flatness and purity
in $\CA$.  A conflation
$$\xymatrix@C-0.9pc{
\CL:\CG'\ar[r]&\CG\ar[r]&\CG''}$$ in $\CA$ is said to be
\textit{pure} if for any object $\CG$ in $\CA$, $\CL\otimes\CG$ is
also a conflation. An object $\CJ$ in $\CA$ is called
\textit{injective} if for any conflation $\CL$ in $\CA$,
$\Hom_{\CA}(\CL,\CJ)$ is a conflation in the ordinary abelian exact
structure of abelian groups. Moreover $\CJ$ is an injective
cogenerator  if for any sequence $\xymatrix@C-0.9pc{
\CL:\CG'\ar[r]&\CG\ar[r]&\CG''}$ in $\CA$ where
$\Hom_{\CA}(\CL,\CJ)$ is a conflation of abelian groups then $\CL$
is a conflation in $\CA$. Let $\CJ$ be an injective cogenerator for
$\CA$ and
$(\textmd{-})^+:={\CH}om_{\CA}(\textmd{-},\CJ):\CA\lrt\CA$. We use
the contravariant functor $(\textmd{-})^+$ and prove  the following
important interpretation of purity in $\CA$.
\begin{proposition}\label{1}
A conflation  $\CL:\xymatrix@C-0.9pc{ \CG'\ar[r]&\CG\ar[r]&\CG''}$
in $\CA$ is pure if and only if $\mathcal{L}^+$ splits.

\end{proposition}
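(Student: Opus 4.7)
My plan is to route both directions through the adjunction (\ref{0}) together with symmetry of $\otimes$ and the cogenerator property of $\CJ$. The single identity that drives the argument is the natural isomorphism of sequences
\begin{equation*}
\Hom_\CA(\CH,\CL^+)\;\cong\;\Hom_\CA(\CL\otimes\CH,\CJ),
\end{equation*}
obtained by applying (\ref{0}) termwise to $\CL$ and using $\CL\otimes\CH\cong\CH\otimes\CL$. This converts statements about $\CL\otimes\CH$ (the tensor side of purity) into statements about $\CL^+$ (the internal-hom side of splitting).

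For the direction ``$\CL$ pure $\Rightarrow \CL^+$ splits'', purity says $\CL\otimes\CH$ is a conflation for every $\CH$, and the injectivity of $\CJ$ then forces $\Hom_\CA(\CL\otimes\CH,\CJ)$ to be a short exact sequence of abelian groups for every $\CH$; by the display, so is $\Hom_\CA(\CH,\CL^+)$. I would then run a Yoneda-style argument on this exactness. Taking $\CH=(\CG')^+$ and lifting $\id_{(\CG')^+}$ through the surjection $f^+_*$ produces a section $s\colon(\CG')^+\to\CG^+$ of $f^+$; taking $\CH=\CG^+$, the element $\id_{\CG^+}-sf^+$ is annihilated by $f^+_*$, hence comes from a map $t\colon\CG^+\to(\CG'')^+$ with $g^+t=\id_{\CG^+}-sf^+$. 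The left-injectivity of $\Hom_\CA(-,g^+)$ (pointwise) identifies $g^+$ as a monomorphism, so cancelling from $g^+(tg^+)=g^+$ (using $f^+g^+=0$) yields $tg^+=\id_{(\CG'')^+}$. The relations $f^+s=\id$, $tg^+=\id$, $g^+t+sf^+=\id$ realize $\CL^+$ as a split conflation in $\CA$, with $\CG^+\cong(\CG'')^+\oplus(\CG')^+$ implemented by $(g^+,s)$ and $(t,f^+)$.

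The converse is lighter: if $\CL^+$ is a split conflation then, since $\Hom_\CA(\CH,-)\colon\CA\to\mathrm{Ab}$ is additive, $\Hom_\CA(\CH,\CL^+)$ is a split short exact sequence of abelian groups for every $\CH$. The display then reads $\Hom_\CA(\CL\otimes\CH,\CJ)$ as a short exact sequence of abelian groups, and the injective cogenerator property of $\CJ$ promotes this to the statement that $\CL\otimes\CH$ is a conflation in $\CA$; varying $\CH$ gives purity of $\CL$. The principal obstacle is therefore concentrated in the forward direction, specifically in upgrading the termwise additive exactness of $\Hom_\CA(\CH,\CL^+)$ to an honest split conflation $\CL^+$ in the exact structure of $\CA$; the monicity of $g^+$ extracted from the left-injectivity of $\Hom_\CA(-,g^+)$ is the cancellation that makes this possible.
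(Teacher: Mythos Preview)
Your proof is correct and follows essentially the same route as the paper: both directions are driven by the adjunction isomorphism $\Hom_\CA(\CH,\CL^+)\cong\Hom_\CA(\CH\otimes\CL,\CJ)$ together with the injective cogenerator property of $\CJ$. The only cosmetic difference is that the paper, in the forward direction, tests the single object $\CH=(\CG')^+$ to produce a section of $f^+$ and then declares $\CL^+$ split, whereas you spell out the remaining splitting data (the retraction $t$ of $g^+$ and the identity $g^+t+sf^+=\id$) by also testing $\CH=\CG^+$; this extra bookkeeping is exactly the standard argument that a section of a deflation forces a biproduct decomposition, so the two proofs coincide in substance.
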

\begin{proof}
If $\CL$ is pure then, $(\CG')^+\otimes\CL$ is a conflation  and so,
Hom$_{\CA}((\CG')^+\otimes\CL,\CJ)$ is a conflation of abelian
groups. By \eqref{0}, we have an isomorphism
$$ \mathrm{Hom}_{\CA}((\CG')^+\otimes\CL,\CJ)\cong
\mathrm{Hom}_{\CA}((\CG')^+,{\CH}om_{\CA}(\CL,\CJ))$$  of
conflations of abelian groups. This implies that $\CL^+$ splits.
Conversely, assume that $\CL^+$ splits. Then for any object $\CF$ in
$\CA$, we have a conflation $\mathrm{Hom}_{\CA}(\CF,
{\CH}om_{\CA}(\CL,\CJ))$ of abelian groups. By \eqref{0}, there is
an isomorphism
$$\mathrm{Hom}_{\CA}(\CF,
{\CH}om_{\CA}(\CL,\CJ))\cong\mathrm{Hom}_{\CA}(\CF\otimes\CL,
\CJ)$$of conflations of abelian groups. Since $\CJ$ is an injective
cogenerator then, $\CF\otimes\CL$ is a conflation  and so we are
done.
\end{proof}
In the next lemma we show the existence of pure injective objects in
$\CA$. An object $\CE$ in $\CA$ is called \textit{pure}
\textit{injective} if it is injective  with respect to pure
conflations.

\begin{lemma}\label{2001}
For any object $\CX$ in $\CA$, $\CX^+$ is  pure injective.
\end{lemma}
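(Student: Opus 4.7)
The plan is to reduce pure injectivity of $\CX^+$ to ordinary injectivity of $\CJ$ by means of the tensor-hom adjunction \eqref{0}, combined with the definition of a pure conflation.

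First I would take an arbitrary pure conflation $\CL:\CG'\rt\CG\rt\CG''$ in $\CA$ and apply the contravariant functor $\Hom_{\CA}(-,\CX^+)$. Since $\CX^+={\CH}om_{\CA}(\CX,\CJ)$, the adjunction \eqref{0} yields a natural isomorphism of diagrams of abelian groups
\[
\Hom_{\CA}(\CL,\CX^+)=\Hom_{\CA}(\CL,{\CH}om_{\CA}(\CX,\CJ))\cong \Hom_{\CA}(\CL\otimes \CX,\CJ),
\]
where naturality in the first variable lets me transfer the question about $\CL$ termwise.

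Next I would use purity of $\CL$: by definition, $\CL\otimes\CX$ is again a conflation in $\CA$. Since $\CJ$ is injective, applying $\Hom_{\CA}(-,\CJ)$ to this conflation produces a conflation of abelian groups. Chaining with the isomorphism above, $\Hom_{\CA}(\CL,\CX^+)$ is a conflation of abelian groups. This is precisely the condition for $\CX^+$ to be injective with respect to every pure conflation, so $\CX^+$ is pure injective.

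Essentially no obstacle is anticipated here: the argument is a formal manipulation combining the adjunction \eqref{0}, the definition of $\otimes$-purity, and the injectivity of $\CJ$. The only point to double-check is that the termwise adjunction isomorphism really respects the differential of $\CL$, which is immediate from naturality of \eqref{0} in its first argument. (One could alternatively route the proof through Proposition~\ref{1} by observing that $\Hom_{\CA}(\CL,\CX^+)\cong \Hom_{\CA}(\CX,\CL^+)$ and that $\CL^+$ splits, but the direct route above is slightly shorter.)
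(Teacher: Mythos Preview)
Your proposal is correct and follows essentially the same approach as the paper: the paper's proof is precisely the adjunction isomorphism $\Hom_{\CA}(\CL,\CX^+)\cong\Hom_{\CA}(\CL\otimes\CX,\CJ)$ combined with purity of $\CL$ and injectivity of $\CJ$, exactly as you outline. Your write-up simply spells out the steps (and the naturality check) in more detail than the paper does.
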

\begin{proof}
Let $\CL$ be a pure conflation in $\CA$. By  \eqref{0}, we have the
isomorphism,
$$\mathrm{Hom}_{\CA}(\CL,
\CX^+)\cong\mathrm{Hom}_{\CA}(\CL\otimes\CX, \CJ)$$ of conflations
of abelian groups. It follows that, $\CX^+$ is pure injective.
\end{proof}

An object $\CF$ in $\CA$ is called \textit{flat} if
-$\otimes\CF:\CA\lrt\CA$ preserves conflations. By the proof of
Lemma \ref{2001}, for a given object $\CX$ in $\CA$, $\CX^+$ is pure
injective. Now, the conditions for proving the main theorem of the
article are available.
\begin{theorem}\label{elh20}
The following conditions are equivalent.

\begin{itemize}
\item[(i)]  $\CF$ is a flat object.
\item[(ii)] $\CF^+$ is injective.
\item[(iii)] Any conflation ending in $\CF$ is pure.

\end{itemize}

\end{theorem}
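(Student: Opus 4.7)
The plan is to prove the three equivalences by first establishing (i) $\Leftrightarrow$ (ii) through the tensor--hom adjunction \eqref{0}, and then (ii) $\Leftrightarrow$ (iii) by combining Proposition~\ref{1} with Lemma~\ref{2001}.

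For (i) $\Leftrightarrow$ (ii) the starting point is the natural isomorphism $\Hom_\CA(\CL\otimes\CF,\CJ)\cong\Hom_\CA(\CL,\CF^+)$ supplied by \eqref{0}, valid for any sequence $\CL$ in $\CA$. Since $\CJ$ is an injective cogenerator, the left-hand side is a short exact sequence of abelian groups exactly when $\CL\otimes\CF$ is a conflation in $\CA$, while the right-hand side is such a sequence exactly when $\Hom_\CA(-,\CF^+)$ is exact on $\CL$. Letting $\CL$ range over all conflations then yields the equivalence of flatness of $\CF$ and injectivity of $\CF^+$.

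For (ii) $\Rightarrow$ (iii) I take a conflation $\CL:\CG'\to\CG\to\CF$ and consider the dualized sequence $\CL^+:\CF^+\to\CG^+\to(\CG')^+$, which is itself a conflation because $\CJ$ is an injective cogenerator. Since $\CF^+$ is injective by~(ii) and sits at the inflation end of $\CL^+$, the identity $\id_{\CF^+}$ extends along $\CF^+\to\CG^+$ to a retraction, so $\CL^+$ splits; Proposition~\ref{1} then yields that $\CL$ is pure.

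For (iii) $\Rightarrow$ (ii), let $\CM:\CG'\to\CG\to\CG''$ be an arbitrary conflation and $f:\CG'\to\CF^+$ an arbitrary morphism; the aim is to extend $f$ to $\CG$. Via \eqref{0}, $f$ corresponds to $\hat f:\CF\to(\CG')^+$, and I form the pullback $P:=\CG^+\times_{(\CG')^+}\CF$ of the deflation in $\CM^+$ along $\hat f$. This produces a conflation $(\CG'')^+\to P\to\CF$ ending in $\CF$, which by~(iii) is pure. By Lemma~\ref{2001} the object $(\CG'')^+$ is pure injective, so $\id_{(\CG'')^+}$ lifts through the inflation $(\CG'')^+\to P$, splitting this pure conflation and producing a section $s:\CF\to P$. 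Composing $s$ with the projection $P\to\CG^+$ gives $\hat g:\CF\to\CG^+$ whose further composition with $\CG^+\to(\CG')^+$ equals $\hat f$ by the pullback property; transposing back via \eqref{0} returns $g:\CG\to\CF^+$ extending $f$, so $\CF^+$ is injective.

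The main technical point I expect to need to justify carefully is that the contravariant functor $(-)^+$ sends conflations to conflations in $\CA$; this underlies both the splitting step of (ii) $\Rightarrow$ (iii) and the identification of the kernel $(\CG'')^+$ of the pulled-back deflation in (iii) $\Rightarrow$ (ii). It should follow from $\CJ$ being an injective cogenerator together with the closed monoidal structure, but the argument deserves explicit attention. The remaining ingredients---the tensor--hom adjunction, Proposition~\ref{1}, and Lemma~\ref{2001}---then slot cleanly into place.
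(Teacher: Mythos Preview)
Your proof is correct and follows essentially the same route as the paper. The equivalence (i) $\Leftrightarrow$ (ii) and the implication (ii) $\Rightarrow$ (iii) match the paper verbatim; for (iii) $\Rightarrow$ (ii) the paper instead shows that any conflation \emph{beginning} with $\CF^+$ splits, pulling back along the canonical unit $\lambda_\CF:\CF\to\CF^{++}$ and invoking the triangle identity $(\lambda_\CF)^+\lambda_{\CF^+}=1_{\CF^+}$, whereas you test injectivity via the extension property and pull back along the transpose $\hat f:\CF\to(\CG')^+$ of an arbitrary $f:\CG'\to\CF^+$. The core mechanism---dualize, pull back to obtain a conflation ending in $\CF$, apply (iii), split via Lemma~\ref{2001}, then transpose back---is identical; your version simply trades the explicit use of $\lambda_\CF$ for naturality of the isomorphism $\Hom_\CA(X,Y^+)\cong\Hom_\CA(Y,X^+)$. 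The technical point you flag (that $(-)^+$ carries conflations to conflations) is equally implicit in the paper's argument, where it is needed for $f^+$ to be a deflation so that the pullback axiom applies.
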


\begin{proof}
$(i)\Rightarrow(ii)$ Let $\CF$ be a flat object and $\CL$ be a
conflation in $\CA$. Then, $\CL\otimes\CF$ is also a conflation.
Apply $\mathrm{Hom}_{\CA}(\textmd{-},\CJ)$ and use the adjoint
property of -$\otimes\CF$ and ${\CH}om_{\CA}(\CF,\textmd{-})$ to
deduce the isomorphism
$$\mathrm{Hom}_{\CA}(\CL\otimes\CF,\CJ)\cong\mathrm{Hom}_{\CA}(\CL,\CF^+)$$
of  conflations of abelian groups. This shows the injectivity of
$\CF^+$.

$(ii)\Rightarrow(i)$ Let $\CF^+$ be an injective object.  For a
given conflation $\CL$ in $\CA$, $\mathrm{Hom}_{\CA}(\CL,\CF^+)$ is
a conflation of abelian groups and so, by the adjoint property of
-$\otimes\CF$ and ${\CH}om_{\CA}(\CF,\textmd{-})$,
$\mathrm{Hom}_{\CA}(\CL\otimes\CF,\CJ)$  is a conflation of abelian
groups. Since $\CJ$ is an injective cogenerator, then
$\CL\otimes\CF$ is a conflation. This shows the flatness of $\CF$.

$(i)\Rightarrow(iii)$. By  $(i)\Leftrightarrow(ii)$, $\CF^+$ is an
injective object. So, for a given conflation
$$\xymatrix@C-0.7pc@R-0.9pc{\CL:\CK\ar[r]& \CG\ar[r]&
\CF}$$ in $\CA$, $\CL^+$  splits and hence by Proposition \ref{1},
$\CL$ is pure.

$(iii)\Rightarrow(i)$.  By  $(i)\Leftrightarrow(ii)$, it is enough
to show that $\CF^+$ is injective. Let
\begin{align}\label{mess0}
 \xymatrix@C-0.7pc@R-0.9pc{\CF^+\ar[r]^f& \CG\ar[r]& \CK}
\end{align}
be a conflation in $\CA$. By the axioms of a closed symmetric
category, we have a morphism
$d_\CF:\CF\lrt{\CH}om_{\CA}(\CF^+,\CF\otimes\CF^+)$ and so by
\eqref{0}, there is  a morphism $\lambda_\CF:\CF\lrt\CF^{++}$ in
$\CA$ (see \cite[pp 97-99]{KM71}). This implies that the composition
$\CF^+\lrt\CF^{+++}\lrt\CF^+$ is the identity $1_{\CF^+}$. By the
axiom of an exact category, the top row of the following pullback
diagram
\[\xymatrix@C-0.7pc@R-0.9pc{\CK^+\ar[r]\ar@{=}[d]&\CQ\ar[r]^t\ar[d]^g&\CF\ar[d]^i\\
\CK^+\ar[r]&\CG^+\ar[r]^{f^+}&\CF^{++}}\] is a pure conflation and
hence, it is split ($\CK^+$ is pure injective). Then, we have a
morphism $t':\CF\lrt \CQ$ such that $tt'=1_{\CF}$. If
$g_1=gt':\CF\lrt \CG^+$, then $f^+g_1=f^+gt'=itt'=i$. Consequently,
in the following commutative diagram

\[\xymatrix@C-0.7pc@R-0.9pc{\CF^+\ar[r]^f\ar[d]^{j}&\CG\ar[r]\ar[d]^k&\CK\ar[d]\\
\CF^{+++}\ar[r]^{f^{++}}&\CG^{++}\ar[r]&\CK^{++} }\] $g_1^+kf =
g_1^+f^{++}j = i^+j = 1_{\CF^+}$. It follows that $\CF^+$ is an
injective object.

\end{proof}

\subsection{ Purity and flatness in the category of complexes in $\CA$.}
Recall that a complex in $\CA$ is a cochain
$$\X:\cdots \rt \CX^{n-1} \st{\pa_\X^{n-1}}{\rt} \CX^n \st{\pa_\X^{n}}{\rt}
\CX^{n+1} \rt \cdots$$ in $\CA$ such that for any $n\in\Z$,
$\pa_\X^{n}\pa_\X^{n-1}=0$. The category of all complexes in $\CA$
is denoted by $\C(\CA)$. A complex $\X$ in $\CA$ is called
\textit{acyclic} if for any $n\in\Z$,
$$\xymatrix@C-0.7pc@R-0.9pc{\Ker\pa_\X^n\ar[r]& \CX^n\ar[r]&
\im\pa_\X^n}$$ is a conflation in $\CA$ and it is called
\textit{pure} \textit{acyclic} if for any object $\CG$ in $\CA$,
$\X\otimes\CG$ is acyclic. An acyclic complex
$$\F:\cdots \rt \CF^{n-1} \st{\pa_\F^{n-1}}{\rt} \CF^n \st{\pa_F^{n}}{\rt}
\CF^{n+1} \rt \cdots$$ in $\CA$ is said to be \textit{flat} if for
any $n\in\Z$, $\Ker\pa_\F^n$ is a flat object in $\CA$. The exact
structure on $\CA$ induces an exact structure on $\CA$ as follows. A
sequence $\xymatrix@C0.7pc@R0.9pc{ \X\ar[r]^f&\Y\ar[r]^g&\K}$ in
$\C(\CA)$ is a conflation if for any $n\in\Z$,
$$\xymatrix@C0.7pc@R0.9pc{
\X^n\ar[r]^{f^n}&\Y^n\ar[r]^{g^n}&\K^n}$$is a conflation in $\CA$.
Proposition \ref{1}  will enable us to define a notion of purity in
$\C(\CA)$ and prove a generalization of \cite[Theorem 2.4]{EG} in
$\C(\CA)$.

\begin{sdefinition}\label{jimi}
A conflation $\mathbf{L}$ in $\C(\CA)$ is called pure if
$\mathbf{L}^+$ splits.
\end{sdefinition}

Notice that if  $\CA$ is locally finitely presented Grothendieck
category with enough projective objects, then the following result
and \cite[Theorem 2.4]{EG} are equivalent (see also \cite{Sten68}).
\begin{stheorem}\label{110}
The following conditions are equivalent.

\begin{itemize}
\item[(i)]  $\F$ is a flat complex in $\CA$.
\item[(ii)] $\F^+$ is an injective complex in $\CA$.
\item[(iii)] $\F$ is a pure acyclic complex of flat objects in $\CA$.
\item[(iv)] Any conflation ending in $\F$ is pure.

\end{itemize}

\end{stheorem}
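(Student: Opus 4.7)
The plan is to lift the proof of Theorem \ref{elh20} from $\CA$ to $\C(\CA)$, using a character functor extended to complexes. I define $\F^+:={\CH}om_\CA(\F,\CJ)$, interpreting $\CJ$ as the complex concentrated in degree zero, so that $(\F^+)^n=(\F^{-n})^+$ with induced differentials. The adjunction \eqref{0} extends degreewise to an isomorphism $\Hom_{\C(\CA)}(\mathbf{L}\otimes\F,\CJ)\cong\Hom_{\C(\CA)}(\mathbf{L},\F^+)$ of abelian groups for every complex $\mathbf{L}$, and this is the mechanism I would use throughout.

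For (i) $\Leftrightarrow$ (ii) and (i) $\Leftrightarrow$ (iv) I would follow the pattern of Theorem \ref{elh20}. A flat complex has flat components, because each $\F^n$ sits in the conflation $\Ker\pa_\F^n\to\F^n\to\Ker\pa_\F^{n+1}$ of flat kernels and flatness is closed under extensions (a consequence of Theorem \ref{elh20} together with the standard fact that extensions of injectives are injective in an exact category). Hence for any conflation $\mathbf{L}$ of complexes, $\mathbf{L}\otimes\F$ is a componentwise conflation; the extended adjunction then transports this to a conflation of abelian groups $\Hom_{\C(\CA)}(\mathbf{L},\F^+)$, proving $\F^+$ injective. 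The converse reverses the argument, using that $\CJ$ is a cogenerator. For (i) $\Leftrightarrow$ (iv), once $\F^+$ is injective any conflation $\mathbf{L}$ ending in $\F$ yields a split $\mathbf{L}^+$, which by Definition \ref{jimi} is purity; the reverse runs back through (ii).

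The real content is (i) $\Leftrightarrow$ (iii). The direction (i) $\Rightarrow$ (iii) is direct: tensoring each conflation $\CK^n\to\F^n\to\CK^{n+1}$ (with $\CK^n=\Ker\pa_\F^n$) by an arbitrary $\CG$ preserves the sequence because the kernels are flat, and splicing yields acyclicity of $\F\otimes\CG$; the components are flat by the extension argument. For the reverse (iii) $\Rightarrow$ (i), the absence of Lazard--Govorov forces the character-module technique: each $(\F^n)^+$ is injective (flat components plus Theorem \ref{elh20}), so $\F^+$ is a complex of injectives. Pure acyclicity of $\F$, through the extended adjunction, translates into the statement that each short exact piece $\CK^n\to\F^n\to\CK^{n+1}$ is a pure conflation in $\CA$; then by Proposition \ref{1} the dual sequence $(\CK^{n+1})^+\to(\F^n)^+\to(\CK^n)^+$ splits, so $(\CK^n)^+$ is a summand of an injective object, and Theorem \ref{elh20} gives that $\CK^n$ is flat. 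The hardest step is precisely this translation of pure acyclicity of $\F$ into purity of the individual short exact pieces: acyclicity of $\F\otimes\CG$ at a single degree encodes only an exactness condition, and one must leverage the full complex structure together with the extended adjunction to upgrade it to the statement that each $\CK^n\to\F^n\to\CK^{n+1}$ is itself pure.
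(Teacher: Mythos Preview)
The paper's own proof is the single sentence ``The proof is straightforward,'' so there is nothing to compare at the level of detail; the intended argument is simply to apply Theorem~\ref{elh20} and Proposition~\ref{1} degreewise. Your proposal is a reasonable attempt to spell this out, but the mechanism you set up for (i)$\Leftrightarrow$(ii) does not work as stated.

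The gap is in your use of the stalk complex $\CJ$ (concentrated in degree~$0$) as though it were an injective cogenerator for $\C(\CA)$. It is neither. With the degreewise exact structure, an injective object of $\C(\CA)$ is a contractible complex of injectives; a nonzero object concentrated in a single degree is never contractible, so $\Hom_{\C(\CA)}(-,\CJ)$ does \emph{not} carry conflations of complexes to short exact sequences of abelian groups. Concretely, a chain map $\X\to\CJ$ is a morphism $\X^0\to\CJ$ vanishing on $\im\pa_\X^{-1}$, and there is no reason an extension of such a map along an inflation $\X^0\hookrightarrow\Y^0$ should again vanish on $\im\pa_\Y^{-1}$. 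For the same reason $\CJ$ fails to detect exactness in $\C(\CA)$ away from degree~$0$, so the converse step ``using that $\CJ$ is a cogenerator'' also breaks. Thus neither direction of your (i)$\Leftrightarrow$(ii) goes through via the adjunction $\Hom_{\C(\CA)}(\mathbf{L}\otimes\F,\CJ)\cong\Hom_{\C(\CA)}(\mathbf{L},\F^+)$.

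The repair is exactly the ``straightforward'' route the paper has in mind: work in $\CA$ rather than in $\C(\CA)$. If $\F$ is flat, each conflation $\CK^n\to\F^n\to\CK^{n+1}$ ends in a flat object, hence is pure by Theorem~\ref{elh20}, hence its $(-)^+$ splits by Proposition~\ref{1}; so $\F^+$ is split acyclic with injective components $(\F^n)^+$, i.e.\ contractible with injective terms, which is precisely what it means to be an injective object of $\C(\CA)$. The remaining implications (and the converse) are obtained by the same degreewise bookkeeping together with Theorem~\ref{elh20}; your analysis of (i)$\Leftrightarrow$(iii) is essentially this, and the ``hardest step'' you flag dissolves once you argue one conflation at a time in $\CA$ rather than through the complex-level adjunction. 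There is no need to import the full monoidal closed structure on $\C(\CA)$ at all.
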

\begin{proof}
The proof is straightforward.
\end{proof}

\subsection{Pure injective objects}
Pure injective objects are one of the most important generalizations
of injective objects which has a significance  role in homological
algebra. For instance, they are essential tools in the Swan's
approach on Cup products, derived functors and Hochschild cohomology
({\cite{Sw99}).

Our motivation on this subsection is a question asked by Rosicky in
\cite[Question 1]{Ro09} (see also \cite[Theorem 2.1]{Sw99}). This
question concerning about the existence of enough $\lambda$-pure
injective objects in a locally $\lambda$-presentable additive
category ($\lambda$ is an infinite regular cardinal). It is known
that there is another notion of purity which is different from the
$\lambda$-purity. This purity is known as $\otimes$-purity and
defined in monoidal categories. We are interested to ask
\cite[Question 1]{Ro09} for this purity and find an answer for it.
In this subsection, we show that any symmetric monoidal closed
quasi-abelian category has enough $\otimes$-pure injective objects.

Assume that $\CA$ is a  pre-abelian category, that is, an additive
category with kernels and cokernels (see \cite{RW77} and \cite{SW11}
for more details). We know that $\CA$ has a natural structure of an
exact category where conflations are short exact sequences. A
subobject $\CF$ of an object $\CG$ in $\CA$ is called \textit{pure}
if the canonical exact sequence
$$\xymatrix@C-0.7pc{0\ar[r]&\CF\ar[r]&\CG\ar[r]&\CG/\CF\ar[r]&0 }$$ is
pure in $\CA$.
\begin{stheorem}\label{2}
The category $\CA$ has enough pure injective objects.
\end{stheorem}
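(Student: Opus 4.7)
The plan is to exhibit, for every object $\CX\in\CA$, a pure inflation from $\CX$ into a pure injective object using the natural biduality morphism $\lambda_\CX\colon \CX\lrt\CX^{++}$ constructed in the proof of Theorem~\ref{elh20}. By Lemma~\ref{2001} applied to $\CX^+$, the object $\CX^{++}=(\CX^+)^+$ is already pure injective, so the work reduces to showing that $\lambda_\CX$ is the inflation of a pure conflation.

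First I would show that $\lambda_\CX$ is a monomorphism. The triangle identity $(\lambda_\CX)^+\circ\lambda_{\CX^+}=1_{\CX^+}$ extracted from the proof of Theorem~\ref{elh20} says that $(\lambda_\CX)^+$ is a split epimorphism of abelian groups, so every $h\colon \CX\lrt\CJ$ factors as $g\circ\lambda_\CX$ for some $g\colon \CX^{++}\lrt\CJ$. If $\lambda_\CX\circ f=0$ for $f\colon Y\lrt\CX$, then $h\circ f=g\circ\lambda_\CX\circ f=0$ for every such $h$, and the cogenerator property of $\CJ$ forces $f=0$.

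The main obstacle is to upgrade this mono to a strict monomorphism in the quasi-abelian category $\CA$, so that the sequence $\CL\colon \CX\lrt\CX^{++}\lrt\CX^{++}/\CX$ is a genuine conflation. I would factor $\lambda_\CX=m\circ\tilde\lambda$ through its strict image, with $m\colon\im(\lambda_\CX)\lrt\CX^{++}$ a strict mono and $\tilde\lambda\colon \CX\lrt\im(\lambda_\CX)$ the bimorphism comparison. Then $m^+$ is surjective because $m$ is a strict mono and $\CJ$ is injective, and the splitting $\lambda_{\CX^+}$ of $(\lambda_\CX)^+=\tilde\lambda^+\circ m^+$ forces $\tilde\lambda^+$ to be a split epi with explicit section $m^+\circ\lambda_{\CX^+}$; combined with $\tilde\lambda^+$ being monic (since $\tilde\lambda$ is epi), we get that $\tilde\lambda^+$ is an isomorphism of abelian groups. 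The delicate part is to bootstrap this Hom-level isomorphism to $\tilde\lambda$ being itself an isomorphism in $\CA$; this uses essentially the quasi-abelian axioms (stability of strict monos and strict epis under pushouts and pullbacks) together with the injectivity and cogenerator property of $\CJ$. Once $\tilde\lambda$ is an iso, $\lambda_\CX$ coincides with the strict mono $m$ and $\CL$ is a conflation.

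Finally, with $\CL$ a genuine conflation, Proposition~\ref{1} reduces purity of $\CL$ to the splitting of $\CL^+$; this splitting is already exhibited, because the rightmost map $(\lambda_\CX)^+$ of $\CL^+$ is split by $\lambda_{\CX^+}$ thanks to the triangle identity. Hence $\lambda_\CX$ is a pure inflation of $\CX$ into the pure injective $\CX^{++}$, establishing that $\CA$ has enough pure injective objects. The entire argument hinges on the third step above, and I expect the one genuinely non-routine point to be the passage from $\tilde\lambda^+$ being an iso in $\mathrm{Ab}$ to $\tilde\lambda$ being an iso in $\CA$, since in an arbitrary quasi-abelian category a bimorphism need not be an isomorphism.
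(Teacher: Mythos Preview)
Your strategy matches the paper's exactly: use the unit $\lambda_\CX\colon\CX\to\CX^{++}$, note that $\CX^{++}$ is pure injective by Lemma~\ref{2001}, and show $\lambda_\CX$ is a pure inflation by checking via the triangle identity $(\lambda_\CX)^+\circ\lambda_{\CX^+}=1_{\CX^+}$ that $(\lambda_\CX)^+$ splits, so Proposition~\ref{1} applies. Your monomorphism argument (factoring through $\CJ$ and using the cogenerator property) differs only cosmetically from the paper's, which instead shows $\Ker\lambda_\CF=0$ directly from naturality of $\lambda$.

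Where you diverge is in worrying about strictness, and here you have put your finger on a genuine issue. You correctly observe that in a quasi-abelian category a monomorphism need not be strict, so one must verify that $\lambda_\CX$ is actually an inflation before Proposition~\ref{1} (which is stated for conflations) can be invoked. The paper does not address this; it passes silently from ``$\lambda_\CF$ is a monomorphism'' to applying Proposition~\ref{1}. Your proposed fix via the strict-image factorization $\lambda_\CX=m\circ\tilde\lambda$ is a reasonable instinct, but the step you yourself flag---deducing that the bimorphism $\tilde\lambda$ is an isomorphism in $\CA$ from $\tilde\lambda^+$ being an isomorphism in $\mathrm{Ab}$---does not follow from the quasi-abelian axioms and the paper's notion of injective cogenerator alone, and you do not supply an argument for it. So this remains a gap in your proof; it is, however, a gap the paper's own proof shares rather than one you have introduced.
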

\begin{proof}
By the axioms of a symmetric monoidal closed category, there is a
morphism $d_\CF:\CF\lrt{\CH}om_{\CA}(\CF^+,\CF\otimes\CF^+)$ and so
by \eqref{0}, we have   a morphism $\lambda_\CF:\CF\lrt\CF^{++}$
 in $\CA$ (see \cite[pp 97-99]{KM71}). We show that $\lambda_\CF:\CF\lrt\CF^{++}$ is a pure monomorphism.
Let $\CK=\Ker\lambda_\CF$. Then we have the
following commutative diagram\[\xymatrix@C-0.7pc@R-0.9pc{0\ar[r]&\CK\ar[r]^i\ar[d]^{\lambda_\CK}&\CF\ar[d]^{\lambda_\CF}&\\
0\ar[r]&\CK^{++}\ar[r]^{i^{++}}&\CF^{++}}\]with exact rows. Since
$i^{++}$ is a monomorphism then $\lambda_{\CK}=0$. This implies that
$\CK=0$ and so $\lambda_\CF$ is a monomorphism. By Proposition
\ref{1}, it is enough to show that the epimorphism
$(\lambda_\CF)^+:(\CF^{++})^+\lrt\CF^+\lrt 0$ admits a section.
Since  $(\CF^{++})^+\cong(\CF^+)^{++}$ then we have the
following commutative diagram\[\xymatrix{\CF^+\ar[r]^{\lambda_{\CF^+}}\ar[dr]_{\mathrm{id}_{\CF^+}}&\CF^{+++}\ar[d]^{(\lambda_\CF)^+}&\\
&\CF^{+}}\]
 in $\CA$ (see \cite[pp.
100, diagram (1.3)]{KM71}). So, by Proposition \ref{1},
$\CF\lrt\CF^{++}$ is a pure monomorphism where $\CF^{++}$ is pure
injective by Lemma \ref{2001}.
\end{proof}

This theorem  gives another proof for \cite[Theorem 2.1]{Sw99} and
\cite[Corollary 4.6, 4.8]{EEO16} and enable us to prove to prove the
existence  of pure injective preenvelope in $\CA$.

\begin{example}
The category can be replaced by any symmetric monoidal closed
Grothenidieck category. For example, the category of modules over an
associative ring, the category of sheaves over an arbitrary
topological space and the category of quasi--coherent sheaves over
an arbitrary scheme (see \cite{H} for the algebraic geometry
background).
\end{example}
For more examples on non-abelian categories see the
\cite{Me18,Me12}.
\begin{stheorem}
The category $\C(\CA)$ has enough pure injective objects.
\end{stheorem}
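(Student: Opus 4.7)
The plan is to replay the proof of Theorem \ref{2} verbatim in $\C(\CA)$, once we have verified that the category of complexes inherits all the structure used there. First I would check that $\C(\CA)$ is again a symmetric monoidal closed quasi-abelian exact category: the tensor product and internal hom extend via the standard total-complex construction, kernels and cokernels are computed termwise so that pre-abelianness and quasi-abelianness descend automatically, and the degreewise conflations introduced before Definition \ref{jimi} supply the ambient exact structure. From the injective cogenerator $\CJ$ of $\CA$ one builds an injective cogenerator $\mathbf{J}$ of $\C(\CA)$ — for instance $\prod_{n \in \Z} D^n(\CJ)$, where $D^n(\CJ)$ is the usual shifted disk complex on $\CJ$ — so that the contravariant functor $(-)^+ := {\CH}om_{\C(\CA)}(-, \mathbf{J}) : \C(\CA) \lrt \C(\CA)$ appearing in Definition \ref{jimi} is available. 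The complex-level analogues of Lemma \ref{2001} and Proposition \ref{1} then follow from the tensor-hom adjunction \eqref{0} in $\C(\CA)$ by the identical formal argument, since only the axioms of a symmetric monoidal closed exact category with an injective cogenerator are used there.

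Once this framework is in place, the argument of Theorem \ref{2} transports without modification. For any $\X \in \C(\CA)$, the symmetric monoidal closed structure produces an evaluation $d_\X : \X \lrt {\CH}om_{\C(\CA)}(\X^+, \X \otimes \X^+)$ and hence, via \eqref{0}, a natural morphism $\lambda_\X : \X \lrt \X^{++}$. Setting $\mathbf{K} = \Ker \lambda_\X$, the commutative square
\[\xymatrix@C-0.7pc@R-0.9pc{0\ar[r]&\mathbf{K}\ar[r]^i\ar[d]^{\lambda_{\mathbf{K}}}&\X\ar[d]^{\lambda_\X}&\\
0\ar[r]&\mathbf{K}^{++}\ar[r]^{i^{++}}&\X^{++}}\]
with exact rows forces $\lambda_{\mathbf{K}} = 0$ because $i^{++}$ is monic, whence $\mathbf{K} = 0$ and $\lambda_\X$ is a monomorphism. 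The identity factorization $\X^+ \st{\lambda_{\X^+}}{\lrt} \X^{+++} \st{(\lambda_\X)^+}{\lrt} \X^+$ coming from \cite[pp.~100, diagram (1.3)]{KM71} exhibits a section of $(\lambda_\X)^+$, so $\lambda_\X$ is a pure monomorphism by Definition \ref{jimi} (combined with the $\C(\CA)$-version of Proposition \ref{1}), and $\X^{++}$ is pure injective by the $\C(\CA)$-version of Lemma \ref{2001}. Hence every complex embeds purely into a pure injective complex.

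The main obstacle is the preliminary bookkeeping: one must verify in detail that the symmetric monoidal closed quasi-abelian exact structure on $\C(\CA)$ is rich enough to carry the evaluation $d_\X$ and the identity factorization of \cite{KM71} at the level of total complexes, and that $\mathbf{J}$ really is an injective cogenerator with respect to which $(-)^+$ reflects conflations. These coherences are standard but sign-sensitive; once they are laid down, the entire abstract proof of Theorem \ref{2} applies to $\C(\CA)$ with no further input.
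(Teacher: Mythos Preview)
Your proposal is correct and matches the paper's approach: the paper's own proof is the single sentence ``The proof is similar to the proof of Theorem~\ref{2},'' and you have correctly unpacked what that similarity entails, including the transfer of the symmetric monoidal closed exact structure and injective cogenerator to $\C(\CA)$ and the verbatim replay of the $\lambda_\X$ argument.
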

\begin{proof}
The proof is similar to the proof of Theorem \ref{2}.
\end{proof}

\end{document}